\newenvironment{enumeratei}{\begin{enumerate}[\upshape (i)]}{\end{enumerate}}
\numberwithin{equation}{section}
\theoremstyle{plain}
 \newtheorem{theorem}{Theorem}[section]
 \newtheorem{lemma}[theorem]{Lemma}
 \newtheorem{proposition}[theorem]{Proposition}
 \newtheorem{corollary}[theorem]{Corollary}
\theoremstyle{definition}
 \newtheorem{definition}[theorem]{Definition}
 \newtheorem{remark}[theorem]{Remark}
 \newtheorem{example}[theorem]{Example}
 \newtheorem*{ackno}{Acknowledgment}
\newcommand \url [1] {{\tt{#1}}}
\newcommand \Clm {C_{\ell\kern-0.5pt\textup m}}
\newcommand \Crm {C_{\textup{rm}}}
\newcommand \Compp [1] {\textup{ComplP}(#1)}
\newcommand \Pcompp [1] {\textup{PseudCP}(#1)}
\newcommand \notmid{\mathrel{|\kern -3.8pt\mathord /}}
\newcommand \np {\widehat n}
\renewcommand \mp {\widehat m}
\newcommand \grid {\textup{Grid}}
\newcommand \sgrid {\textup{SGrid}} 
\newcommand \powset {\textup{PowSet}}
\newcommand \isyst {\mathcal H}
\newcommand \jsyst {\mathcal T}
\newcommand \ksyst {\mathcal W}
\newcommand \biz {\mathcal B}
\newcommand \cfunc [2] {f(#1,#2)}
\newcommand \board[2] {\textup{Board}(#1,#2)}
\newcommand\ofromto [3] { \textup{From}\kern-1pt\textup{To}_{#1}(#2,#3)} 
\newcommand\fromto [3] { \textup{HInt}_{#1}(#2,#3)} 
\newcommand \lat [1] {\textup{Lat}(#1)}
\newcommand \lmpt [1] {\textup{LPt}(#1)}
\newcommand \rmpt [1] {\textup{RPt}(#1)}
\newcommand \lowstar [1] {{#1}_\ast}
\newcommand \Jir [1] {\textup{Ji}(#1)} 
\newcommand \Mir [1] {\textup{Mi}(#1)}
\newcommand \atoms [1] {\textup{Atoms}(#1)}
\newcommand \length [1] {\textup{length}(#1)}
\newcommand \nablaell [1] {\nabla_{\kern -2pt #1}}
\newcommand \qq[1] {{\kern #1 pt}}
\newcommand\ideal[1]{\mathord\downarrow #1}
\newcommand \tuple [1] {\langle #1\rangle}
\newcommand \pair [2] {\tuple{#1,#2}}
\renewcommand\phi{\varphi}
\renewcommand\emptyset{\varnothing}
\newcommand \tbf [1] {\textbf{#1}} 
\newcommand \set[1] {\{#1\}}
\newcommand \bigset[1] {\bigl\{#1\bigr\}}
\renewcommand\phi{\varphi}
\renewcommand\epsilon{\varepsilon}
\newcommand \then {\mathrel{\Rightarrow}} 
\newcommand \nonparallel {\mathrel{
\not\mathord{\kern -1.5 pt\parallel}}}
\newcommand\init [1] {} 
\newcommand\nothing [1] {}
\begin{document}
\title[CD-independent subsets in meet-distributive lattices]
{CD-independent subsets in meet-distributive lattices}

\author[G.\ Cz\'edli]{G\'abor Cz\'edli}
\email{czedli@math.u-szeged.hu}
\urladdr{http://www.math.u-szeged.hu/\textasciitilde{}czedli/}
\address{University of Szeged, Bolyai Institute. 
Szeged, Aradi v\'ertan\'uk tere 1, HUNGARY 6720}

\thanks{This research was supported by
the European Union and co-funded by the European Social Fund  under the project ``Telemedicine-focused research activities on the field of Mathematics, 
Informatics and Medical sciences'' of project number    ``T\'AMOP-4.2.2.A-11/1/KONV-2012-0073'', and by  NFSR of Hungary (OTKA), grant number 
K83219}



\keywords{
CD-independent subset, laminar system, meet-distributive lattice, convex geometry of circles, number of islands}

\date{July 9, 2013\\
\phantom{nk} 2000 \emph{Mathematics Subject Classification}.  {Primary 06C10; secondary 05A05 and 05B25}
}

\begin{abstract} 
A subset $X$ of a finite lattice $L$ is CD-independent if the meet of any two incomparable elements of $X$ equals 0. In 2009, Cz\'edli, Hartmann and Schmidt 
proved that any two maximal CD-independent subsets of a finite distributive lattice 
have the same number of elements. In this paper, we prove that if $L$ is a finite meet-distributive lattice, then the size of every CD-independent subset of $L$ is at most the number of atoms of $L$ plus the length of $L$. If, in addition, there is no three-element antichain of meet-irreducible elements, then we give a recursive description of maximal CD-independent subsets. Finally, to give an application of CD-independent subsets, we give a new approach to count islands on a rectangular board.
\end{abstract}

\maketitle

\section{Introduction and the main result}
\label{introsection}
\subsection{Outline and goals}
The concept of CD-independent subsets in lattices was introduced in Cz\'edli, Hartmann and Schmidt~\cite{czg-h-sch}. 
The primary purpose of the present paper is to generalize the main result of \cite{czg-h-sch} from distributive lattices to meet-distributive ones. The secondary goal is to give a combinatorial application of the lattice-theoretical paper \cite{czg-h-sch} by counting islands. 
Since the  cross-reference between the combinatorial part, 
Section~\ref{applicationssection}, 
and the lattice theoretical part, Sections~\ref{introsection}--\ref{examplessection}, is minimal, see  Definition~\ref{cddef} and Proposition~\ref{czghschprop}, readers interested mainly in combinatorics can start directly with Section~\ref{applicationssection}. 

After recalling some lattice theoretical concepts, the present section formulates the main result, Theorem~\ref{thmmain}. Its proof is given in Section~\ref{circlessection}, while Section~\ref{examplessection} gives some examples that  rule out certain generalizations.
 
\subsection{Basic concepts from Lattice Theory} 
All lattices in the present paper are assumed to be finite, even if this is not repeated all the times.
For $u\neq 0$ in a (finite) lattice $L$, let $\lowstar u$ denote the meet of all lower covers of $u$. If the interval $[\lowstar u,u]$ is a distributive lattice for every $u\in L\setminus \set0$, then $L$ is \emph{meet-distributive}. This concept goes back to 
\init{R.\,P.\ }Dilworth \cite{r:dilworth40} but there are more than a dozen equivalent definitions. In fact, meet-distributivity or its dual is one of the most often rediscovered concepts in Lattice Theory; see \init{K.\ }Adaricheva~\cite{adaricheva},  \init{K.\ }Adaricheva, \init{V.\,A.\ }Gorbunov and \init{V.\,I.\ }Tumanov~\cite{r:adarichevaetal}, \init{B.\ }Monjardet~\cite{monjardet},   and 
\init{N.\ }Caspard and 
\init{B.\ }Monjardet~\cite{caspardmonjardet}; see also  \init{G.\ }Cz\'edli~\cite[Proposition 2.1 and Remark 2.2]{czgcoord} and \init{K.\ }Adaricheva and \init{G.\ }Cz\'edli \cite{adarichevaczg} for recent surveys.

As usual, a  finite lattice $L$ is \emph{lower semimodular} if whenever $a,b\in L$ such that $a$ is covered by $a\vee b$, in notation $a\prec a\vee b$, then $a\wedge b\prec b$. Equivalently, if the implication $a\prec b\then a\wedge c\preceq b\wedge c$ holds for all $a,b,c\in L$. We will often use the fact, without further reference, that 
finite meet-distributive lattices are lower semimodular; see \init{R.\,P.\ }Dilworth \cite{r:dilworth40} and \init{B.\ }Monjardet~\cite{caspardmonjardet}, or see also the dual of  \init{G.\ }Cz\'edli~\cite[Proposition 2.1 and Remark 2.2]{czgcoord} for an overview. 

An element of $L$ is \emph{meet-irreducible} if it has exactly one cover. The set of meet-irreducible elements of $L$ is denoted by  $\Mir L$. The set $\Jir L$ of \emph{join-irreducible} elements is defined dually.
Following \init{G.\ }Gr\"atzer and \init{E.\ }Knapp \cite{gratzerknapp} and, in the present form,  \init{G.\ }Cz\'edli and \init{E.\,T.\ }Schmidt \cite{czgschtJH}, $L$ is \emph{dually slim} if $\Mir L$ contains no three-element antichain. Due to \init{G.\ }Cz\'edli~\cite{czgcircles}, and to the dual of results in 
\init{G.\ }Cz\'edli and \init{G.\ }Gr\"atzer \cite{czgggresect} and 
 \init{G.\ }Cz\'edli and \init{E.\,T.\ }Schmidt \cite{czgschtJH}, \cite{czgschvisual}, and \cite{czgschslim2}, dually slim lower semimodular lattices are understood quite well.

\begin{remark}\label{remsmdmlyn} It follows from the dual of Cz\'edli,  Ozsv\'art, and Udvari~\cite{czgozsvudv} and Cz\'edli and Schmidt~\cite[Corollary 3.5]{czgschcompser} 
that, for a finite lattice $L=\tuple{L;\leq}$, the following three conditions are equivalent:
\begin{itemize}
\item $L$ is meet-distributive and dually slim;
\item $L$ is lower semimodular and dually slim;
\item there exist an $n\in\mathbb N_0$, a finite group $G$, and composition series $\set1=H_0\triangleleft H_1\triangleleft  \dots \triangleleft H_n=G$ and $\set1=K_0\triangleleft K_1\triangleleft  \dots \triangleleft K_n=G$ such that $\tuple{L;\leq}$ is isomorphic to $\tuple{\set{H_i\cap K_j: 0\leq i,j\leq n};\subseteq}$.
\end{itemize}
\end{remark}

For a lattice $L$, let $\atoms L$ and $\length L$ stand for the set of atoms of $L$ and the length of $L$, respectively. Since we only deal with lower semimodular, finite lattices, $\length L$ equals the size of any maximal chain minus 1.  

\begin{definition}[Cz\'edli, Hartmann and Schmidt~\cite{czg-h-sch}]\label{cddef}
A subset $X$ of a lattice $L$ is \emph{CD-independent} if for any $x,y\in X$ such that $x$ and $y$ are incomparable (in notation, $x\parallel y$), we have $x\wedge y=0$. In other words, if any two elements of
$X$ are either Comparable, or Disjoint; this is were the acronym CD comes from. Note that CD-independence is also known as \emph{laminarity}, see Pach, Pluh\'ar,  Pongr\'acz, and Szab\'o~\cite{pachetall}.
\end{definition}

The main result of \cite{czg-h-sch} is the following:
\begin{proposition}[\cite{czg-h-sch}]\label{czghschprop} Let $L$, $X$, and $C$ be a finite, lower semimodular lattice, a maximal CD-independent subset of $L$, and a maximal chain of $L$, respectively. Then the following two assertions hold.
\begin{itemize}
\item $C\cup\atoms L$ is a maximal CD-independent subset of $L$.
\item If, in addition, $L$ is distributive, then $|X|=|C\cup\atoms L|$, that is, the size of every maximal CD-independent subset is $\length L + |\atoms L|$.
\end{itemize}
\end{proposition}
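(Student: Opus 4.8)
The plan is to prove the two bullets separately: the first for an arbitrary finite lower semimodular $L$, the second under the additional distributivity hypothesis, deducing the numerical value from the fact that $C\cup\atoms L$ is itself maximal.

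For the first bullet, the CD-independence of $C\cup\atoms L$ is immediate: two distinct atoms meet in $0$, and an atom $a$ with $a\parallel c$ for $c\in C$ satisfies $a\wedge c<a$, hence $a\wedge c=0$ because $a$ is an atom; chain elements are comparable. The content is maximality. Given $z\in L\setminus(C\cup\atoms L)$, I would write $C$ as $0=c_0\prec c_1\prec\dots\prec c_n=1$, let $c_j$ be the least element of $C$ with $z\leq c_j$ and $c_i$ the greatest with $c_i\leq z$. Since $z\notin C$ we get $c_i<z<c_j$, and since the cover $c_i\prec c_{i+1}$ cannot sandwich $z$ we must have $i+1<j$, whence $c_{j-1}\parallel z$ (it is not $\geq z$ by minimality of $j$, and not $\leq z$ by maximality of $i$). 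The key step is $z\vee c_{j-1}=c_j$, so that $c_{j-1}\prec c_{j-1}\vee z$; lower semimodularity then yields $c_{j-1}\wedge z\prec z$, and as $z$ is not an atom this forces $c_{j-1}\wedge z\neq 0$. Thus $(C\cup\atoms L)\cup\set{z}$ is not CD-independent, proving maximality.

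For the second bullet I would pass to Birkhoff's representation $L\cong\mathcal O(\Jir L)$, under which meet is intersection, join is union, and $x\wedge y=0$ says the corresponding order ideals are disjoint; thus CD-independent subsets become laminar families of ideals, $\length L=|\Jir L|=:n$, and $\atoms L$ corresponds to the singletons $\set{p}$ with $p$ minimal in $\Jir L$, say $k:=|\atoms L|$ of them. Two preliminary observations: every maximal laminar family contains all atoms, since a singleton is comparable to or disjoint from every ideal; and, ordered by inclusion, such a family with $0$ removed is a tree $T$ rooted at $1$ whose leaves are exactly the atoms (a minimal nonempty ideal must be a singleton). To each node $M$ of $T$ I attach its \emph{defect} $|M|-\sum_t|M_t|$, the sum running over the children $M_t$ of $M$; a telescoping computation shows $\sum_{M\in T}(\text{defect of }M)=|1|=n$, independently of the family.

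The crux is to read off the individual defects from maximality. Here I would argue that no node has three or more children, since otherwise the union of two of them would be an ideal lying strictly between them and their parent and laminar with the whole family, which could be adjoined; the same device forces the two children of a two-child node to partition it (defect $0$) and forces a one-child node to cover its child (defect $1$), while a leaf has defect $1$. Writing $I_1,I_2$ for the numbers of one- and two-child internal nodes, the defect identity gives $k+I_1=n$, and counting the edges of the at-most-binary tree $T$ gives $I_2=k-1$; hence $|X|=|T|+1=k+I_1+I_2+1=n+k=\length L+|\atoms L|$. Since this holds for every maximal CD-independent family, in particular $|X|=|C\cup\atoms L|$. I expect the main obstacle to be exactly this maximality analysis — verifying that the candidate ideals one proposes to adjoin are genuinely laminar with every member of the family — which is precisely the point where distributivity, via the order-ideal representation, is indispensable.
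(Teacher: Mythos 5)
Your proof is correct, so the main thing to say is how it relates to the argument this result rests on. Note that the paper itself contains no proof of Proposition~\ref{czghschprop}: it is quoted from \cite{czg-h-sch}, and the only glimpse of that proof the paper gives (inside its proof of Part~\eqref{parth} of Theorem~\ref{thmmain}) is that its first step establishes $|\max(X\setminus\set 1)|\leq 2$; both there and in the paper's own proof of Part~\eqref{parte}, the counting is then done by induction on $|L|$, decomposing $X$ along the principal ideals $\ideal{a_i}$ of the maximal elements of $X\setminus\set 1$. Your route differs genuinely on the counting half. Your first bullet (sandwiching $z$ as $c_i<z<c_j$ with $j\geq i+2$, deducing $c_{j-1}\vee z=c_j$ from the cover $c_{j-1}\prec c_j$, then using lower semimodularity to get $c_{j-1}\wedge z\prec z$, which is nonzero since $z$ is not an atom) is the natural argument and is watertight. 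For the second bullet you pass to Birkhoff's representation $L\cong\mathcal{O}(\Jir L)$, view a maximal CD-independent set as a laminar family of down-sets, organize it (minus $0$) into a tree rooted at $\Jir L$ whose leaves are the atoms, and compute $|X|$ from two identities: the telescoping defect sum equals $|\Jir L|=\length L$, and the edge count of an at-most-binary tree gives $I_2=|\atoms L|-1$, with maximality pinning down the individual defects (no node has three children, the two children of a node partition it, a single child is covered). I checked the delicate points and they hold: the sets you adjoin ($M_1\cup M_2$, or $M_1\cup\set{p}$ with $p$ minimal in $M\setminus M_1$) really are down-sets laminar with every member, because any member meeting them but not containing the parent $M$ must, by the parent relation, equal $M_1$ or $M_2$ or lie inside a single child; and the leaves are exactly the atoms because maximality forces every singleton $\set{p}$, $p$ minimal in $\Jir L$, into the family. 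As for what each approach buys: your global tree-defect count makes the role of distributivity completely transparent (joins of down-sets are unions) and delivers the exact size in one stroke, but it is wedded to Birkhoff duality and cannot survive beyond distributive lattices; the inductive decomposition of \cite{czg-h-sch}, reused throughout this paper, is less sharp per step but is exactly what generalizes to meet-distributive lattices, where only the upper bound of Part~\eqref{parte} and the recursive descriptions of Parts~\eqref{partk} and \eqref{parth} remain true.
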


Note that it is also possible to define the concept of CD-independent subsets of posets, see Horv\'ath and Radeleczki~\cite{horvathradCD}, but the present paper is restricted to lattices. 
In view of further results in \cite{czg-h-sch}, we cannot expect that the second part of this proposition extends to a significantly larger class of lattices. However, replacing distributivity by meet-distributivity, which is a weaker assumption, the theorem below still shows some property of CD-independent subsets. 

For a poset $H$, let $\max(H)$ stand for the set of maximal elements of $H$. If $u$ is an element of a lattice $L$, then the principal ideal $\set{x\in L: x\leq u}$ is denoted by $\ideal u$. For $a,b\in L$, $\pair ab$ is a \emph{complemented pair} if $a\wedge b=0$ and $a\vee b=1$. Given $a\in L$, if $c$ is the largest element of $L$ such that $a\wedge c=0$, then $c$ is the \emph{pseudocomplement} of $a$. Note that $a$ need not have a pseudocomplement, but if it has, then its pseudocomplement is uniquely determined.   If $a$ and $b$ are mutually pseudocomplements of each other, then $\pair ab$ is a \emph{pseudocomplemented pair}.

Since the concept of pseudocomplemented pairs seems to be new, some comments are appropriate here. The five-element nonmodular lattice, $N_5$, witnesses that a complemented pair need not be a pseudocomplemented pair. The lattice obtained from $N_5$ by adding a new top element shows that a pseudocomplemented pair need not be a complemented pair. 
However, 
\begin{equation}\label{dstrlcppscp}
\begin{aligned}
&\text{if $L$ is a distributive lattice, then every complemented} \cr
&\text{pair $\pair ab$ of $L$ is a pseudocomplemented pair}
\end{aligned}
\end{equation}
since if $a\wedge x=0$, then $x=1\wedge x=(a\vee b)\wedge x=(a\wedge x)\vee (b\wedge x)=0\vee (b\wedge x)=b\wedge x$ implies $x\leq b$, whence $b$ is the pseudocomplement of $a$.

\subsection{Main result}
For a subset $X$ of $L$, we let
\[
\begin{aligned}
\Compp L&=\set{\pair ab\in L^2: \pair ab\text{ is a complemented pair of }L},\cr
\Pcompp L&=\set{\pair ab\in L^2: \pair ab\text{ is a pseudocomplemented pair of }L}\text.
\end{aligned}
\]
Since  $\pair 01\in \Pcompp L\cap\Compp L$ but we always want to exclude this possibility, we shall stipulate  $a\parallel b$ or $\set{a,b}\subseteq \Mir L$. Our main goal is to prove the following result. While its Part~\eqref{parte} is relatively simple, Parts~\eqref{partk} and \eqref{parth} give a lot of additional information on maximal CD-independent subsets for two important subclasses of meet-distributive lattices. Obviously, if $X$ is a \emph{maximal} CD-independent subset of $L$, then $\set{0,1}\cup\atoms L\subseteq X$. Therefore, to characterize maximal CD-independent subsets in the theorem below, it suffices to consider those subsets $X$ of $L$ that extend $\set{0,1}\cup\atoms L$.

\begin{theorem}\label{thmmain} 
Let $L$ be a finite lattice consisting of at least two elements.
\begin{enumerate}[\upshape (1)]
 \item\label{parte} If $L$ is    meet-distributive and $Y$ is a CD-independent subset of $L$, then we have $|Y|\leq \length L + |\atoms L|$.
\end{enumerate}
In the rest of the theorem, let $X$ be a subset of $L$ such that $\set{0,1}\cup\atoms L\subseteq X$. We denote $|\!\max(X\setminus\set 1)|$ by $k$, and we let $\max(X\setminus\set 1)=\set{a_1,\dots, a_k}$.
\begin{enumerate}[\upshape (1)]\setcounter{enumi}{1}
 \item\label{partk} If $L$ is dually slim and lower semimodular $($equivalently, dually slim and meet-distributive$)$, then following three conditions are equivalent.
  \begin{enumerate}[\upshape (a)]
   \item\label{partka} $X$ is a maximal CD-independent subset of $L$.
   \item\label{partkb} 
    \begin{enumerate}[\upshape (i)]
     \item\label{partkba} Either $k=1$, $a_1$ is a coatom of $L$, and $X\setminus\set 1$ is a maximal CD-independent subset of $\ideal{a_1}$, 
     \item\label{partkbb} or $k=2$, $\pair{a_1}{a_2}\in \Compp L\cap \Pcompp L$,  $\set{a_1,a_2}\subseteq \Mir L$, and 
        $X\cap\ideal{a_i}$ is a maximal CD-independent subset of $\ideal{a_i}$ for $i=1,2$.
    \end{enumerate}
   \item\label{partkc} Either \eqref{partkba} above, or \eqref{partkbb} above with replacing  ``$\set{a_1,a_2}\subseteq \Mir L$'' by the condition ``$\,a_1\parallel a_2$''.
  \end{enumerate}
 \item\label{parth} If $L$ is distributive, then the following two conditions are equivalent.
  \begin{enumerate}[\upshape (a)]
   \item\label{partha} $X$ is a maximal CD-independent subset of $L$.
   \item\label{parthb} 
    \begin{enumerate}[\upshape (i)]
     \item\label{parthba}  Either $k=1$, $a_1$ is a coatom of $L$, and $X\setminus\set 1$ is a maximal CD-independent subset of $\ideal{a_1}$, 
     \item\label{parthbb} or $k=2$, $\pair{a_1}{a_2}\in \Compp L$, $a_1\parallel a_2$, and $X\cap\ideal{a_i}$ is a maximal CD-independent subset 
       of $\ideal{a_i}$ for $i=1,2$.
    \end{enumerate}
  \end{enumerate}
\end{enumerate}
\end{theorem}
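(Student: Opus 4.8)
\emph{Overall strategy.} The plan is to treat all three parts by induction on $\length L$, organizing everything around the set $\set{a_1,\dots,a_k}=\max(Y\setminus\set 1)$ of top-level branches (after harmlessly adjoining $0$ and $1$, which only enlarges a CD-independent set). This set is automatically an antichain whose members are pairwise \emph{disjoint}, since incomparable members of a CD-independent set meet to $0$. Writing $h$ for the height function (well defined since $L$ is lower semimodular), the engine of Part~\eqref{parte} is the inequality
\[ \textstyle\sum_{i=1}^m h(a_i)\le h\bigl(\bigvee_{i=1}^m a_i\bigr) \]
for pairwise disjoint $a_1,\dots,a_m$ in a finite meet-distributive $L$. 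I would prove this through the convex-geometry representation of meet-distributive lattices (Edelman--Jamison): $L$ is the lattice of closed sets of a closure with the anti-exchange property, $h(a)$ is the number of points of $a$ outside $\mathrm{cl}(\emptyset)$, and $a_i\wedge a_j=0$ means the underlying point sets meet only in $\mathrm{cl}(\emptyset)$; hence $\sum_i h(a_i)=|\bigcup_i a_i\setminus\mathrm{cl}(\emptyset)|\le|\mathrm{cl}(\bigcup_i a_i)\setminus\mathrm{cl}(\emptyset)|=h(\bigvee_i a_i)$. A naive induction on $m$ fails, because $(\bigvee_{i<m}a_i)\wedge a_m$ need not be $0$ (the three-points-plus-interior-point convex geometry shows this), so the representation really is doing the work.

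\emph{Part~\eqref{parte}.} Given the inequality, the rest is bookkeeping. Each $\ideal{a_i}$ is again meet-distributive with $\length{\ideal{a_i}}=h(a_i)$, the atom sets $\atoms{\ideal{a_i}}$ are pairwise disjoint subsets of $\atoms L$, and $Y\setminus\set1=\bigcup_i(Y\cap\ideal{a_i})$ overlaps only in $0$. Applying the induction hypothesis on each $\ideal{a_i}$ together with $\sum_i h(a_i)\le\length L$ yields $|Y|\le(2-m)+\sum_i h(a_i)+|\atoms L|\le\length L+|\atoms L|$ when $m\ge2$, while the case $m=1$ needs only $h(a_1)\le\length L-1$.

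\emph{Parts~\eqref{partk} and \eqref{parth}.} I would prove (a)$\Rightarrow$(b)/(c) and the converse separately. For the forward direction, maximality of $X$ forces $a_1$ to be a coatom when $k=1$ (otherwise any element strictly between $a_1$ and $1$ is comparable to all of $X$ and is adjoinable) and forces $\bigvee_i a_i=1$ when $k\ge2$; moreover each $X\cap\ideal{a_i}$ is maximal CD-independent in $\ideal{a_i}$. The bound $k\le2$ is reached differently: in the distributive case by the \emph{merge argument} — if $k\ge3$ then $a_1\vee a_2$ is disjoint from every other branch by distributivity, lies strictly below $1$, and is adjoinable, a contradiction; in the dually slim case by showing each branch lies in $\Mir L$, so that $k\ge3$ would produce a three-element antichain in $\Mir L$. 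For the converse I would verify CD-independence branch-by-branch (across branches, meets lie below $a_i\wedge a_j=0$) and check maximality: a candidate $z$ incomparable to some branch $a_i$ with $z\wedge a_i\ne0$ violates CD-independence, whereas a $z$ disjoint from every branch must be $0$ — in the distributive $k=2$ case because $z=z\wedge(a_1\vee a_2)=(z\wedge a_1)\vee(z\wedge a_2)=0$, and in the dually slim $k=2$ case because $z\wedge a_1=0$ forces $z\le a_2$ and $z\wedge a_2=0$ forces $z\le a_1$, whence $z\le a_1\wedge a_2=0$. This is exactly where the pseudocomplemented-pair hypothesis is used; it also explains the role of \eqref{dstrlcppscp} in the distributive case, where $\Compp L\subseteq\Pcompp L$ makes an explicit $\Pcompp$ hypothesis unnecessary.

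\emph{Main obstacle.} The hard core is the dually slim half of Part~\eqref{partk}: showing that the top branches are meet-irreducible (hence $k\le2$), that $\pair{a_1}{a_2}$ is a pseudocomplemented pair, and the implication (c)$\Rightarrow$(b). I expect to extract these from maximality together with the structure theory of dually slim lower semimodular lattices recalled in the introduction — the width-two decomposition of $\Mir L$ and the planar, ``two-coatom'' geometry from Cz\'edli--Gr\"atzer and Cz\'edli--Schmidt — the mechanism being that a meet-reducible branch would have a cover adjoinable to $X$, while incomparable complemented meet-irreducibles in such lattices are forced to be pseudocomplements of each other. The equivalence (b)$\Leftrightarrow$(c) then reduces to the trivial direction (incomparability is automatic for a complemented pair with both entries different from $1$) together with this structural converse.
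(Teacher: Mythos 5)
Your treatment of Parts \eqref{parte} and \eqref{parth} is sound: the inequality $\sum_i h(a_i)\le h(\bigvee_i a_i)$ obtained from Edelman's anti-exchange representation is a correct (and essentially equivalent) substitute for the paper's use of Avann's theorem $|\Jir K|=\length K$ together with the disjointness of the sets $\Jir{\ideal{a_i}}$, and your distributive argument (the merge $a_1\vee a_2$ to force $k\le 2$, then \eqref{dstrlcppscp} for the converse) is the paper's own route. The genuine gap is exactly what you call the main obstacle, the dually slim core of Part~\eqref{partk}, and neither of the two mechanisms you offer for it survives scrutiny. The claim that ``incomparable complemented meet-irreducibles in such lattices are forced to be pseudocomplements of each other'' is false. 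Let $F=\set{C_1,C_2,C_3}$ be three pairwise disjoint circles of equal radius with collinear centers, listed from left to right; this is a separated concave set, so $L=\lat F$ is dually slim and lower semimodular by Proposition~\ref{cirlProp}. Its elements are $\emptyset$, the three singletons, $\set{C_1,C_2}$, $\set{C_2,C_3}$, and $F$ (note that $\set{C_1,C_3}$ is not closed, since $C_2$ lies in the convex hull of $C_1\cup C_3$). Then $\set{C_1}$ and $\set{C_3}$ are incomparable, each is meet-irreducible (their unique covers are $\set{C_1,C_2}$ and $\set{C_2,C_3}$), and they form a complemented pair because $\set{C_1}\vee\set{C_3}=F$; yet the pseudocomplement of $\set{C_1}$ is $\set{C_2,C_3}\ne\set{C_3}$, so $\pair{\set{C_1}}{\set{C_3}}\notin\Pcompp L$. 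Consequently your forward direction never establishes the $\Pcompp L$ clause of \eqref{partkb}, and your proposed proof of \eqref{partkc}$\Rightarrow$\eqref{partkb} collapses as well. (The true implication runs the other way: membership in $\Compp L\cap\Pcompp L$ forces membership in $\Mir L$. That is Corollary~\ref{corpsccmirr}, which the paper deduces \emph{from} Theorem~\ref{thmmain}, so it cannot be assumed as an input.)

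The second mechanism, ``a meet-reducible branch would have a cover adjoinable to $X$'', is equally unusable, and with it your derivation of $k\le2$. Take five such circles $C_1,\dots,C_5$ and the CD-independent set $X$ consisting of $\emptyset$, $F$, the five atoms, and the branches $b_1=\set{C_1,C_2}$, $b_2=\set{C_3}$, $b_3=\set{C_4,C_5}$. The middle branch $b_2$ is meet-reducible, but neither of its covers $\set{C_2,C_3}$, $\set{C_3,C_4}$ is adjoinable to $X$: each meets a neighbouring branch in an atom while being incomparable to it. The element witnessing non-maximality of $X$ is the merge $b_1\vee b_2=\set{C_1,C_2,C_3}$, not a cover of $b_2$. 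So meet-reducibility of a branch does not locally produce an adjoinable cover; and since for a \emph{maximal} $X$ meet-reducible branches do not exist at all (that is precisely the content of \eqref{partkb}), the implication you want to invoke is vacuous rather than provable by the means you describe. The same example shows why your order of deductions --- first $\Mir L$ for every branch, then $k\le2$ from dual slimness --- cannot be executed: in a hypothetical configuration with $k\ge3$, the middle branches have no reason to be meet-irreducible. The paper proceeds in the opposite order inside the circle representation: for $k\ge3$ it adjoins $a_1\vee a_2\subseteq\fromto F{A_1}{B_2}$ to contradict maximality, and only then, for $k=2$, uses $a_1\vee a_2=1$ to conclude $A_1=\Clm$ and $B_2=\Crm$, which makes the filter of each $a_i$ a chain (whence $a_i\in\Mir L$) and yields the pseudocomplementation property from the geometry of leftmost and rightmost points. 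These steps are the real content of Part~\eqref{partk}, and they are absent from your proposal.
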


\begin{remark} By Remark~\ref{remsmdmlyn}  and Proposition~\ref{czghschprop}, $C\cup\atoms L$ is always a maximal CD-indep\-endent subset, provided $L$ is meet-distributive and $C$ is a maximal chain of $L$. Furthermore, $|C\cup\atoms L|=\length L+|\atoms L|$ by lower semimodularity. Hence, the upper bound in Part \eqref{parte} is sharp. Note also that Parts~\eqref{partk} and \eqref{parth} give recursive descriptions for maximal CD-independent subsets.
\end{remark} 

The following statement will be derived from Part~\eqref{partk} of Theorem~\ref{thmmain}.

\begin{corollary}\label{corpsccmirr}
If $L$ is a finite, dually slim, lower semimodular lattice, $a_1,a_2\in L\setminus\set{0,1}$, and  $\pair{a_1}{a_2}\in \Compp L\cap \Pcompp L$, then $a_1,a_2\in\Mir L$.
\end{corollary}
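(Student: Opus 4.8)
The plan is to apply the equivalence of conditions \eqref{partkb} and \eqref{partkc} in Part~\eqref{partk} of Theorem~\ref{thmmain}, by building an explicit CD-independent subset $X$ whose set of maximal proper elements is exactly $\set{a_1,a_2}$ and which satisfies condition \eqref{partkc}; the clause $\set{a_1,a_2}\subseteq\Mir L$ appearing in \eqref{partkb} will then be forced. A preliminary observation comes for free: since $\pair{a_1}{a_2}$ is a complemented pair and $a_1,a_2\notin\set{0,1}$, we must have $a_1\parallel a_2$, because $a_1\leq a_2$ would give $a_1=a_1\wedge a_2=0$ and $a_2\leq a_1$ would give $a_1=a_1\vee a_2=1$. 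This is precisely the hypothesis that variant \eqref{partkbb} of condition \eqref{partkc} requires.

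Next I would construct $X$. For $i=1,2$ choose a maximal CD-independent subset $M_i$ of the ideal $\ideal{a_i}$; by maximality $M_i$ contains $0$, the top element $a_i$, and every atom of $\ideal{a_i}$. Put $X=\set 1\cup M_1\cup M_2$. Three small verifications are needed. First, $\set{0,1}\cup\atoms L\subseteq X$: every atom $p$ of $L$ satisfies $p\wedge a_1\in\set{0,p}$, and if $p\wedge a_1=0$ then $p\leq a_2$ because $a_2$, being the pseudocomplement of $a_1$, is the largest element disjoint from $a_1$; hence each atom of $L$ is an atom of $\ideal{a_1}$ or of $\ideal{a_2}$, so it lies in $M_1\cup M_2$. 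Second, $X$ is CD-independent: meets inside $M_1$ or inside $M_2$ are as in the respective ideals, a cross pair $x\leq a_1$, $y\leq a_2$ has $x\wedge y\leq a_1\wedge a_2=0$, and $1$ is comparable with everything. Third, $X\cap\ideal{a_i}=M_i$, since $1\notin\ideal{a_i}$ and the only element of the other summand $M_{3-i}$ lying below $a_i$ is $0$.

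With $X$ in hand I would pin down $\max(X\setminus\set 1)=\set{a_1,a_2}$, so that $k=2$: each $a_i$ is the top of $\ideal{a_i}$, and nothing of $M_{3-i}$ can lie above $a_i$ because that would contradict $a_1\parallel a_2$, while every other member of $M_i$ is strictly below $a_i$. Now all requirements of variant \eqref{partkbb} within condition \eqref{partkc} hold: $k=2$, $\pair{a_1}{a_2}\in\Compp L\cap\Pcompp L$ by hypothesis, $a_1\parallel a_2$ by the preliminary observation, and $X\cap\ideal{a_i}=M_i$ is maximal CD-independent in $\ideal{a_i}$. Thus condition \eqref{partkc} holds, whence by Theorem~\ref{thmmain}\eqref{partk} so does \eqref{partkb}; since $k=2$ excludes alternative \eqref{partkba}, alternative \eqref{partkbb} applies, and its clause $\set{a_1,a_2}\subseteq\Mir L$ is the desired conclusion.

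I expect the main obstacle to be conceptual rather than computational: recognizing that it is the pseudocomplement hypothesis, and not mere complementation, that forces every atom of $L$ below $a_1$ or $a_2$, so that $X$ legitimately extends $\set{0,1}\cup\atoms L$ and qualifies for the theorem's setup. The remaining care is bookkeeping — the identity $X\cap\ideal{a_i}=M_i$ and the computation of $\max(X\setminus\set 1)$ — both of which hinge on combining $a_1\parallel a_2$ with $a_1\wedge a_2=0$.
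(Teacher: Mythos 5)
Your proof is correct, and it starts from the same construction as the paper's own proof --- $X=\set1\cup M_1\cup M_2$, where $M_i$ is a maximal CD-independent subset of $\ideal{a_i}$ --- but it routes through Theorem~\ref{thmmain}\eqref{partk} along a genuinely different implication. The paper verifies condition \eqref{partka}: the bulk of its argument is a proof by contradiction that $X$ is itself a \emph{maximal} CD-independent subset of $L$ (any element $u$ that could be added is forced, via $\pair{a_1}{a_2}\in\Pcompp L$, below $a_1$ or below $a_2$, contradicting the maximality of $M_1$ or $M_2$); the standing hypothesis $\set{0,1}\cup\atoms L\subseteq X$ then comes for free from that maximality, and the implication \eqref{partka}$\,\Rightarrow\,$\eqref{partkb} finishes. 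You instead verify condition \eqref{partkc}, whose ingredients ($k=2$, $a_1\parallel a_2$, $\pair{a_1}{a_2}\in\Compp L\cap\Pcompp L$, and maximality of $X\cap\ideal{a_i}=M_i$ inside the ideals) are nearly immediate from the construction, and you invoke \eqref{partkc}$\,\Rightarrow\,$\eqref{partkb}; the price is that the theorem's setup hypothesis $\set{0,1}\cup\atoms L\subseteq X$ must be checked by hand, which is exactly where your pseudocomplementation argument for atoms enters. So both proofs use $\Pcompp L$ once and in the same spirit (to push an element disjoint from $a_1$ below $a_2$), but you apply it only to atoms, whereas the paper applies it to an arbitrary hypothetical new element; in exchange, you never need to establish maximality of $X$ in $L$ at all. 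Your route is the more economical of the two, since it exploits the full strength of the equivalence \eqref{partkb}$\,\Leftrightarrow\,$\eqref{partkc}, which the paper proves in Theorem~\ref{thmmain} but then leaves unused in its own proof of the corollary.
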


In view of the fact that the analogous statement fails in the eight-element boolean lattice, this corollary is a bit surprising.

%

\section{Circles and the proof of the main result}\label{circlessection}
Before proving Theorem~\ref{thmmain}, we recall some results from \init{G.\ }Cz\'edli \cite{czgcircles}. Note that this will be the first application of the main result of \cite{czgcircles}.
As usual,  a \emph{circle} in the plane is a set $\set{\pair xy: (x-u)^2+(y-v)^2=r^2}$ where $u,v,r\in \mathbb R$ and $r\geq 0$. Let $F$ be a finite set of circles in the plane. A subset $Y$ of $F$ is \emph{closed} if whenever $C\in F$ and $C$ is in the convex hull of $\bigcup\set{D: D\in Y}$, then $C\in Y$. Less formally (but not quite precisely), if $Y$ is closed with respect to the usual convex hull operation restricted to $F$. 
Let $\lat F$ denote the set of closed subsets of $F$. With respect to inclusion, $\lat F$ is a lattice, and $\emptyset, F\in \lat F$. We call $\lat F$ a \emph{lattice of circles}.
If the centers of the circles in $F$ are on the same line, then $F$ is \emph{collinear}. In the collinear case, we always assume that the line containing the centers is the $x$ axis. A collinear set $F$ of circles is \emph{separated} if no point of the $x$ axis belongs to more than one member of $F$. For example, if we disregard the dotted arcs, then  $F$ depicted in Figure~\ref{fig1} is a separated collinear set of circles.

\begin{figure}[htc]
\centerline
{\includegraphics[scale=1.0]{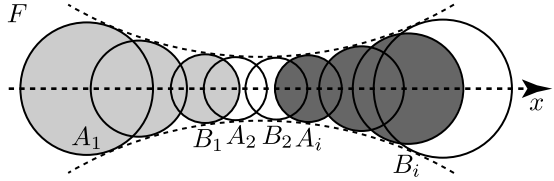}}
\caption{A separated concave set of collinear circles 
\label{fig1}}
\end{figure}

Next, let $F$ be a separated collinear set of circles. For $C\in F$, $C$ is of the form $\set{\pair xy: (x-u)^2+y^2=r^2}$, where $0\leq r\in\mathbb R$. The points $\lmpt C=\pair{u-r}0$ and $\rmpt C=\pair{u+r}0$ on the $x$-axis are the \emph{leftmost point} and the \emph{rightmost point} of $C$, respectively.
Note that for points  $\pair a0$ and $\pair b0$ on the $x$-axis, $\pair a0\leq\pair b0$ is always understood as $a\leq b$. 
For $A,B\in F$, let
\[\ofromto FAB=\set{C\in F: \lmpt A\leq \lmpt C\,\text{ and }\, \rmpt C\leq \rmpt B}\text.
\]
This set can be empty, and even if it is not empty, neither $A$, nor $B$ has to belong to it. For example, if $B$ is encapsulated in $A$, then $\ofromto FAB$ contains $B$ but not $A$. If $A,B\in \ofromto FAB$, then we write $\fromto FAB$ instead of $\ofromto FAB$, and we call $\fromto FAB$ a \emph{horizontal interval} determined by $A$ and $B$. That is,
\[
\begin{aligned}
\fromto FAB&=\set{C\in F: \lmpt A\leq \lmpt C\,\text{ and }\, \rmpt C\leq \rmpt B},\cr
&\text{provided }\lmpt A \leq \lmpt B\text{ and }\rmpt A \leq \rmpt B\text.
\end{aligned}
\]
For example, $\fromto F{A_1}{B_1}$  in Figure~\ref{fig1} consists of the light grey circles while $\fromto F{A_i}{B_i}$ from the dark grey ones. Note that, for a pair $\pair AB\in F^2$, the horizontal interval $\fromto FAB$ is not necessarily defined. 
If we want to emphasize that $\fromto FAB$ exists, then we write $\ofromto FAB=\fromto FAB$. 
We say that $F$ is a \emph{concave set} of collinear circles if for all $C_1,C_2,C_3\in F$, the conjunction of $\lmpt{C_1}\leq \lmpt{C_2}$ and $\rmpt{C_2}\leq \rmpt{C_3}$ implies that the smallest closed subset of $F$ that contains $C_1$ and $C_3$ also contains $C_2$. In other words, $F$ is concave if for all $X\in \lat F$ and $A,B\in X$ such that $\fromto FAB$ defined, $\fromto FAB\subseteq X$. 
In Figure~\ref{fig1}, the circles determined by dotted arcs do not belong to $F$; their purpose is to indicate what concavity means. However, this figure does not reflect generality since $F$ in Figure~\ref{fig2} with encapsulated circles is also a concave set of collinear circles.

Using a result of Edelman~\cite{edelman}, see also \cite[Lemma 3.5]{czgcircles}, one can translate some results of \cite{czgcircles} to the language of Lattice Theory as follows.

\begin{proposition}[{\cite[Proposition 2.1, Theorem 2.2, and Lemma 3.1 ]{czgcircles}}]\label{cirlProp}\ 
\begin{enumeratei}
\item[\textup{(A)}]
If $F$ is a finite set of circles in the plane, then $\lat F$ is a meet-distributive lattice. 
\item[\textup{(B)}]
Dually slim, lower semimodular lattices are, up to isomorphism, characterized as lattices $\lat F$ where $F$ is a separated concave set of collinear circles.
\item[\textup{(C)}] If  $F$ is a separated concave set of collinear circles, then we have
$\lat F=\set{\emptyset}\cup\{\fromto FAB:A,B\in F\}$. Furthermore, for each $\emptyset\neq X\in\lat F$, there are a unique $A\in X$ and a unique $B\in X$ such that $X=\fromto FAB$.
\end{enumeratei}
\end{proposition}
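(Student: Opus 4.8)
The three parts are best handled through the correspondence between convex geometries and meet-distributive lattices, and I would organize the argument around that bridge, treating the concrete collinear case (Parts~(C) and~(B)) after the general Part~(A).

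For Part~(A), I would first check that the map sending $Y\subseteq F$ to $\set{C\in F: C\subseteq\operatorname{conv}\bigl(\bigcup_{D\in Y}D\bigr)}$ is a closure operator on $F$: extensivity and monotonicity are immediate, while idempotence holds because adjoining circles that already lie inside a convex region leaves that region's convex hull unchanged. Thus $\lat F$ is exactly the lattice of closed sets of this closure space. The crux is the anti-exchange axiom: if two distinct circles $C_p,C_q$ both lie outside the closure of $A$ but $C_p\subseteq\operatorname{conv}\bigl(\bigcup(A\cup\set{C_q})\bigr)$, then $C_q\not\subseteq\operatorname{conv}\bigl(\bigcup(A\cup\set{C_p})\bigr)$. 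I would establish this by a separation argument: a boundary point $z$ of $C_p$ lying outside $K:=\operatorname{conv}(\bigcup A)$ can be written as a proper convex combination of a point of $K$ and a point of $C_q$; reading this relation the other way exhibits the $C_q$-point as lying strictly beyond $z$ on a ray leaving $K$, so $C_q$ sits on the far side of a line through $z$ supporting $K$, which is incompatible with $C_q\subseteq\operatorname{conv}(\bigcup(A\cup\set{C_p}))$. This is the content of the cited \cite[Lemma 3.5]{czgcircles}. With anti-exchange in hand, Edelman's theorem \cite{edelman}, identifying the closed-set lattices of finite convex geometries with the finite meet-distributive lattices, gives Part~(A) at once.

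For Part~(C), let $F$ be separated concave collinear and let $\emptyset\neq X\in\lat F$. Separation forces the values $\lmpt C$ to be pairwise distinct over $C\in X$, and likewise the $\rmpt C$, since two circles sharing a point of the $x$-axis would contradict separation. Let $A\in X$ be $\lmpt$-minimal and $B\in X$ be $\rmpt$-maximal; these are unique. Then $\fromto FAB$ is defined, every $C\in X$ satisfies $\lmpt A\le\lmpt C$ and $\rmpt C\le\rmpt B$ so that $X\subseteq\fromto FAB$, while the reformulated concavity condition quoted before the proposition yields $\fromto FAB\subseteq X$; hence $X=\fromto FAB$. That every defined horizontal interval is in turn closed follows because the $x$-extent of $\operatorname{conv}(\bigcup\fromto FAB)$ reaches neither left of $\lmpt A$ nor right of $\rmpt B$, so its closure adds no circle and, being closed, must by the previous step coincide with $\fromto FAB$. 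Uniqueness of $\pair AB$ is then immediate: in any representation the two endpoints must be the $\lmpt$-minimal and $\rmpt$-maximal members of $X$, which are unique by separation.

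Finally, Part~(B) splits into two implications. The forward one is short: if $F$ is separated concave collinear then $\lat F$ is meet-distributive by Part~(A), hence lower semimodular; and I would prove dual slimness from the horizontal-interval description of Part~(C), showing that a three-element antichain of meet-irreducible closed sets would require three mutually incomparable intervals whose endpoint configuration cannot be realized along the single ordered $x$-axis. The backward implication is the genuine representation theorem and is where I expect the real work: given an abstract finite dually slim lower semimodular lattice $L$, one must manufacture a separated concave collinear set $F$ of circles with $\lat F\cong L$. Here I would invoke the structure theory of slim semimodular lattices --- their planar, coordinatizable diagrams and trajectory description as developed in \cite{czgschtJH}, \cite{czgggresect}, and \cite{czgschvisual} --- to place the circles along the $x$-axis so that the left and right endpoints encode the two coordinates of each element and so that containment of horizontal intervals reproduces the order of $L$. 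Verifying that the resulting $F$ is both separated and concave, and that $\lat F$ is order-isomorphic to $L$, is the main obstacle and the most technical step of the whole proposition.
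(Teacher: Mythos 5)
A point of calibration first: the paper itself gives no proof of this proposition --- it is imported wholesale from \cite{czgcircles} (Proposition 2.1, Theorem 2.2, and Lemma 3.1 there), with Edelman's correspondence \cite{edelman} used only to translate those results into lattice-theoretic language. So your proposal is measured against that source rather than against anything in the present text. Your overall architecture is the intended one, and your Part~(C) is essentially complete and correct: separation makes the values $\lmpt C$ (and the $\rmpt C$) pairwise distinct, the $\lmpt$-minimal $A$ and $\rmpt$-maximal $B$ give $X\subseteq\fromto FAB$, the reverse inclusion is exactly the reformulated concavity quoted before the proposition, the $x$-extent computation shows every defined horizontal interval is itself closed, and uniqueness follows. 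Part~(A) also takes the right route (closure operator, anti-exchange, Edelman's theorem).

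There are, however, genuine gaps. (i) Your anti-exchange argument fails as written: for an \emph{arbitrary} boundary point $z\in C_p$ outside $K=\operatorname{conv}\bigl(\bigcup A\bigr)$, the relation $z=\lambda k+(1-\lambda)q$ only places a point $q$ of the disk of $C_q$ beyond $z$ on one ray; since $C_p$ is a circle through $z$, the hull $\operatorname{conv}(K\cup C_p)$ in general reaches strictly beyond any line through $z$ supporting $K$, so a single point of $C_q$ beyond such a line is not incompatible with $C_q\subseteq\operatorname{conv}\bigl(\bigcup(A\cup\set{C_p})\bigr)$. The point $z$ must be chosen extremal: writing $h_S(u)=\sup_{s\in S}\langle s,u\rangle$, pick $u$ with $h_{C_p}(u)>h_K(u)$; if both exchange memberships held, then on the whole open arc of such directions one gets $h_{C_q}(u)=h_{C_p}(u)$, that is $\langle c_p-c_q,u\rangle=r_q-r_p$, and a linear functional cannot be constant on a nondegenerate arc of the unit circle unless $c_p=c_q$, whence $r_p=r_q$ and $C_p=C_q$, a contradiction. (ii) Your dual-slimness argument for the easy half of (B) rests on a false assertion: three pairwise incomparable horizontal intervals \emph{are} realizable on the $x$-axis (take $\lmpt{A_1}<\lmpt{A_2}<\lmpt{A_3}$ and $\rmpt{B_1}<\rmpt{B_2}<\rmpt{B_3}$; inclusion of intervals is equivalent to the coordinatewise order on the pairs $(-\lmpt{A},\rmpt{B})$, so these three form an antichain). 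What must be shown instead is that the middle member of any such antichain is meet-\emph{reducible}: $X_1\vee X_2=\fromto F{A_1}{B_2}$ and $X_2\vee X_3=\fromto F{A_2}{B_3}$ (these are defined by the endpoint inequalities, closed by your $x$-extent argument, and equal to the joins by concavity), and their intersection is exactly $X_2$, so $X_2\notin\Mir{\lat F}$; this is, incidentally, the same endpoint computation that the paper itself performs in the $k\geq 3$ step of its proof of Theorem~\ref{thmmain}\eqref{partk}. (iii) The converse half of (B), manufacturing a separated concave collinear $F$ with $\lat F\cong L$ from an abstract dually slim lower semimodular $L$, is only announced, not argued; you correctly identify it as the substance of Theorem 2.2 of \cite{czgcircles}, but as it stands your proposal establishes (A), (C), and one direction of (B) only modulo the fixes above, and defers the representation direction entirely to the literature --- exactly as the paper under review does.
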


Now, we are ready to prove our main result.

\begin{proof}[Proof of Theorem~\ref{thmmain}]
We prove Part \eqref{parte} by induction on $|L|$. For $|L|\leq 4$, $L$ is distributive and \eqref{parte} follows from Proposition~\ref{czghschprop}. 

Assume that $|L|>4$ and that Part \eqref{parte} holds for all lattices of smaller size. 
Let $k=|\max(Y\setminus\set 1)|$, and let $\max(Y\setminus\set 1)=\set{a_1,\dots, a_k}$. We can assume that $Y$ is a maximal CD-independent subset of $L$; this assumption implies $\atoms L\subseteq Y$.

Assume first that $k=1$, and let $Y_1=Y\cap\ideal {a_1}=Y\setminus\set1$. 
Clearly, $Y_1$ is a CD-independent set of $\ideal{a_1}$. By the induction hypothesis, $|Y_1|\leq \length{\ideal{a_1}}+|\atoms{\ideal{a_1}}|$. This, together with 
$\atoms{\ideal{a_1}}\subseteq \atoms L$ and 
$1+\length{\ideal{a_1}}\leq \length L$,  yields 
\[|Y|=1+|Y_1| \leq 1+\length{\ideal{a_1}}+|\atoms{\ideal{a_1}}|\leq \length L+ |\atoms L|\text,
\]   
as desired.

Next, we assume $k\geq 2$. 
Since $\atoms L\subseteq Y\setminus\set 1 \subseteq \ideal{a_1}\cup\dots\cup \ideal {a_k}$, we  conclude
$\atoms L=\atoms{\ideal{a_1}}\cup\dots\cup\atoms{\ideal {a_k}}$.  Here the union is disjoint since $\set{a_1,\dots,a_k}$ is an antichain and $a_i\wedge a_j=0$ for $i\neq j$ by the CD-independence of $Y$. Consequently,
\begin{equation}\label{eqaduni}
|\atoms L|=|\atoms{\ideal{a_1}}|+\dots+|\atoms{\ideal{a_k}}|\text.
\end{equation}
For $1\leq i<j\leq k$, we have $\Jir{\ideal{a_i}} \cap \Jir{\ideal{a_j}}=\emptyset$ since $a_i\wedge a_j=0$. On the other hand, $\Jir{\ideal{a_t}}\subseteq \Jir L$ for $t=1,\dots, k$. Hence, $|\Jir{\ideal{a_1}}|+\dots+ |\Jir{\ideal{a_k}}|\leq|\Jir L|$. We know from 
\init{M.\ }Stern \cite[Theorem 7.2.27]{stern}, who attributes it to \init{S.\,P.\ }Avann~\cite{avann61} and \cite{avann64}, 
that $|\Jir K|=\length K$ for every meet-distributive lattice;  see also the dual of \init{G.\ }Cz\'edli~\cite[Proposition 2.1(iii)$\Leftrightarrow$(v)]{czgcoord} for more historical comments. Clearly, the ideals $\ideal{a_i}$ are meet-distributive. Thus the last inequality turns into
\begin{equation}\label{eqLhLn}
\length{\ideal{a_1}}+\dots+\length{\ideal{a_k}} \leq \length L \text.
\end{equation}

Next, for $i=1,\dots, k$, let $Y_i=Y\cap \ideal{a_i}$. Since $Y_i$ is clearly a CD-independent subset of $\ideal{a_i}$, the induction hypothesis gives
\begin{equation}\label{eqLkTn}
|Y_i|\leq |\atoms{\ideal{a_i}}| +  \length{\ideal{a_i}}\, \text{ for }i=1,\dots,k\text.
\end{equation}
Now, using the previous formulas, $Y_i\cap Y_j=\set{0}$ for $i\neq j$, and $k\geq 2$, we can compute as follows; note that $2$ at the beginning counts $0=0_L$ and $1=1_L$.
\begin{align*}
|Y|&=2+\sum_{i=1}^k |Y_i\setminus\set 0| 
\overset{\eqref{eqLkTn}}{\leq} 
2+\sum_{i=1}^k \bigl( |\atoms{\ideal{a_i}}| +  \length{\ideal{a_i}}-1 \bigr)\cr
&\leq \sum_{i=1}^k  |\atoms{\ideal{a_i}}| 
+ \sum_{i=1}^k  \length{\ideal{a_i}} 
\overset{\eqref{eqaduni},\eqref{eqLhLn} } {\leq}|\atoms L| + \length L \text.
\end{align*}
This completes the induction step and proves Part \eqref{parte}.

Next, we prove Part \eqref{partk}. Proposition~\ref{cirlProp}(B), together with Remark~\ref{remsmdmlyn}, allows us to assume that $L=\lat F$, where $F$ is a separated concave set of collinear circles.
Since $F$ is separated, it contains a unique \emph{leftmost circle}, $\Clm$. That is, we have $\lmpt\Clm<\lmpt D$ for all $D\in F\setminus\set{\Clm}$. Similarly, we have a unique \emph{rightmost circle} $\Crm\in F$ with the property $\rmpt D  <\rmpt\Crm$ for all $D\in F\setminus\set{\Crm}$. 

Assume Part~\eqref{partka}, that is, let $X$ be a maximal CD-independent subset of $L=\lat F$. 
By Proposition~\ref{cirlProp}(C), there exist unique $A_j$ and $B_j$ in $F$  such that we have $a_j=\fromto F{A_j}{B_j}$, for $j\in\set{1,\dots,k}$. 
For example, in Figure~\ref{fig1}, where the label of a circle is always below its center,  $a_1$ consists of the light grey circles while $a_i$ from the dark grey ones. 
Since $\lmpt {A_j}\leq\rmpt{A_j}$, 
$\lmpt {B_j}\leq\rmpt{B_j}$, and  $A_j, B_j\in a_j=\fromto F{A_j}{B_j}$, we know that
\begin{equation}\label{siTgj} \lmpt{A_j}\leq \rmpt{A_j}\leq \rmpt{B_j}\text{ and }\lmpt{A_j}\leq\lmpt{B_j}\leq\rmpt{B_j},
\end{equation} 
for $j\in\set{1,\dots,k}$. However, note that $\lmpt{B_j}\leq \rmpt{A_j}$ or even $A_j=B_j$ can occur. 

We assert that, for $i\neq j$, $\lmpt{A_i}\neq \lmpt{A_j}$ and $\rmpt{B_i}\neq \rmpt{B_j}$. 
(Since $F$ is separated, this is equivalent to the statement that the $A_i$ are pairwise distinct, and so are the $B_i$.)
By left-right symmetry, it suffices to deal with $\lmpt{A_i}$ and $\lmpt{A_j}$. 
Aiming for a contradiction, suppose  
$\lmpt{A_i} = \lmpt{A_j}$. If
$\rmpt{A_i}\leq \rmpt{A_j}$, then 
\[\lmpt{A_j}\leq\lmpt{A_i}\text{ and }
\rmpt{A_i}\leq  \rmpt{A_j}
\overset{\eqref{siTgj}}\leq \rmpt{B_j}
\]
gives $A_i\in a_i\cap a_j=\emptyset$, a contradiction. Otherwise, if $\rmpt{A_i} > \rmpt{A_j}$, we similarly obtain $A_j\in a_i\cap a_j=\emptyset$, a contradiction again. 
Thus the $\lmpt{A_i}$, $i\in \set{1,\dots,k}$, are pairwise distinct, and so are the $\rmpt{B_i}$. Hence, we can choose the indices such that 
\begin{equation}\label{dkHsV}
\lmpt{A_1}<\lmpt{A_2}<\dots<\lmpt{A_k}\text.
\end{equation}
If we had $\rmpt{B_i}\geq \rmpt{B_j}$ for some $1\leq i<j\leq k$, then  
\[\lmpt{A_i}\overset{\eqref{dkHsV}}<\lmpt{A_j}\text{ and }\rmpt{A_j}\overset{\eqref{siTgj}}{\leq}  \rmpt{B_j}\leq \rmpt{B_i}
\]
would give $A_j\in a_i\cap a_j$, a contradiction. This shows 
\begin{equation}\label{dkRsV}
\rmpt{B_1}<\rmpt{B_2}<\dots<\rmpt{B_k}\text.
\end{equation}

Next, for the sake of contradiction, suppose $k\geq 3$. Now,
\eqref{siTgj}, \eqref{dkHsV}, \eqref{dkRsV}, and $k\geq 3$ easily imply $a_1\vee a_2\subseteq \fromto F{A_1}{B_2}$, $B_2\notin a_1$, $A_1\notin a_2$,  and $B_k\notin \fromto F{A_1}{B_2}$. 
Hence,  $a_1\vee a_2$ is neither the largest element of $\lat F$, nor it is one of $a_1$ and $a_2$. Therefore, $X$ is a proper subset of $X'=X\cup\set{a_1\vee a_2}$. To obtain a contradiction, it suffices to prove that $X'$ is CD-independent. It is sufficient to show that $(a_1\vee a_2)\wedge a_i=0$ for $i\leq 3$ since $X$ is CD-independent.  Hence, for $i\geq 3$, it suffices to prove $\fromto F{A_1}{B_2}\cap \fromto F{A_i}{B_i}=\emptyset$.  Suppose for (an encapsulated) contradiction that a circle $C\in F$ belongs to this intersection. This gives
\[\lmpt{A_2}\overset{\eqref{dkHsV}}<\lmpt{A_i}\leq \lmpt C
\text{ and }
\rmpt C\leq \rmpt {B_2},
\]
implying  $C\in \fromto F{A_2}{B_2}\cap \fromto F{A_i}{B_i}=a_2\cap a_i= \emptyset$. This is a contradiction proving that $X'$ is CD-independent. However, this is impossible since $X$ was a maximal CD-independent subset of $\lat F$. This proves $k\leq 2$.

Armed with $k\leq 2$, first we deal with the case $k=2$. Since $\max(X\setminus\set 1)$ is an antichain, $a_1\parallel a_2$. Their meet is $0_L$, that is, $a_1\cap a_2=\emptyset$, since $X$ is CD-independent.
If we had $a_1\vee a_2<1$, then $X\cup \set{a_1\vee a_2}$ would also be CD-independent, in contradiction with the maximality of $X$.  Hence, $\pair{a_1}{a_2}\in \Compp L$.

Suppose for contradiction that $\lmpt\Clm<\lmpt{A_1}$. 
It follows from \eqref{siTgj}, \eqref{dkHsV}, and \eqref{dkRsV} that $\ofromto F{A_1}{B_2}=\fromto F{A_1}{B_2}$,  $a_1,a_2\subseteq \fromto F{A_1}{B_2}$, but ${\Clm}\notin\fromto F{A_1}{B_2}$, contradicting $a_1\vee a_2=1_L$.   
This proves the first half of
\begin{equation}\label{ksiszeig}
\begin{aligned}
a_1&=\fromto F{A_1}{B_1} =\fromto F{\Clm}{B_1} \text{ and, in particular,   }A_1=\Clm\cr
 a_2&=\fromto F{A_2}{B_2}=\fromto F{A_2}{\Crm}\text{ and, in particular,   }B_2=\Crm;
\end{aligned}
\end{equation}
its second half follows similarly.

If we had an $x\in L$ such that $a_2<x<1$ and $a_1\wedge x=0$, then $X\cup\set{x}$ would be a CD-independent subset that is strictly larger than $X$. Thus, we conclude that
\begin{equation}\label{fwhdez}
a_1\wedge a_2=0\,\text{ but, for every $\,x\in L$, }\,   a_2 < x < 1\text{ implies } a_1\wedge x\neq 0\text.
\end{equation}

Next, we show that, for every $V\in F$,
\begin{equation}\label{sisdHg}
\text{if  $a_1\cap \fromto FVV=\emptyset\,$, then $a_1\cap \fromto FV{\Crm}=\emptyset$.}
\end{equation}
Suppose the contrary, that is, let a circle $D\in F$ belong to $a_1\cap \fromto FV{\Crm}$ such $a_1\cap \fromto FVV=\emptyset$.
Since $V\in \fromto FVV$, we have $V\notin a_1=\fromto F{\Clm}{B_1}$, which gives 
$\rmpt{B_1} < \rmpt{V}$. 
Since we also have $\rmpt D\leq \rmpt{B_1}$ by $D\in a_1$, we conclude $\rmpt D\leq \rmpt V$ by transitivity. On the other hand, $D\in \fromto FV{\Crm}$ yields $\lmpt V\leq \lmpt D$, and it follows that $D\in \fromto FVV$. Hence, 
$D\in a_1\cap \fromto FVV$, which is a contradiction proving \eqref{sisdHg}.

Now, we are in the position to prove that $a_2$ is a pseudocomplement of $a_1$. Assume that $x\in L\setminus\set0$ such that $a_1\wedge x=0$. Consider an arbitrary circle $V$ in $x$. The obvious inequality $\fromto FVV\leq x$ implies $a_1\cap \fromto FVV=\emptyset$. Applying \eqref{sisdHg}, we obtain $a_1\cap \fromto FV{\Crm}=\emptyset$.
By  \eqref{fwhdez}, this rules out the inequality $a_2 < \fromto FV{\Crm}$, which is equivalent to $\lmpt V< \lmpt{A_2}$. Therefore,   $\lmpt{A_2}\leq\lmpt V$, and we have $V\in a_2$. Since $V\in x$ was arbitrary, we conclude $x\leq a_2$. This proves that $a_2$ is a pseudocomplement of $a_1$. An analogous argument shows that $a_1$ is a pseudocomplement of $a_2$. Thus $\pair{a_1}{a_2}\in \Pcompp L$.

To prove $a_1\in \Mir L$, assume $a_1=u_1\wedge u_2$. We have $u_1=\fromto F{U_i}{W_i}$ for $i=1,2$ and appropriate circles $U_1,W_1,U_2,W_2\in F$. Since $a_1\leq u_i$, 
\eqref{ksiszeig} yields $U_i=\Clm$, for $i=1,2$. Thus, since $\rmpt{W_1}, \rmpt{W_2}
\in \mathbb R$ are comparable, $u_1$ and $u_2$ are comparable, and $a_1=u_1\wedge u_2\in\set{u_1,u_2}$. This and an analogous argument for $a_2$ show that $\set{a_1,a_2}\subseteq \Mir L$. 

Finally, armed with $\pair{a_1}{a_2}\in \Compp L\cap \Mir L^2$ and using the maximality of $X$, it is straightforward to see that $a_1\parallel a_2$ and that $X\cap\ideal{a_i}$ is a maximal CD-independent set of $\ideal{a_i}$, for $i=1,2$. 
This settles the case $k=2$.

Since the case $k=1$ is evident by the maximality of $X$, we have shown that \eqref{partka} implies \eqref{partkb}.

The implication \eqref{partkb} $\Longrightarrow$ \eqref{partkc} is trivial.

Next, we prove that \eqref{partkc} implies \eqref{partka}. We can assume $k=2$ since the case $k=1$ is trivial. Since $\max(X\setminus\set 1)=\set{a_1,a_2}$, we have $X=\set 1\cup(X\cap \ideal{a_1})\cup(X\cap \ideal{a_2})$, and this is a disjoint union. 
It follows trivially from $\pair{a_1}{a_2}\in\Compp L$ that $X$ is CD-independent. To prove that it is maximal, assume that $u\in L$ such that $X'=X\cup\set u$ is also CD-independent. 
Depending on the ordering on $\set{a_1,u}$, there are three cases.

First, consider the case $u\parallel a_1$. Then the CD-independence of $X'$ gives $a_1\wedge u=0$, and we obtain $u\leq a_2$ from $\pair{a_1}{a_2}\in \Pcompp L$. That is, $u\in X'\cap \ideal{a_2}$. Clearly, $X'\cap \ideal{a_2}$ is CD-independent in $\ideal{a_2}$ and it includes $X\cap\ideal{a_2}$. The maximality of $X\cap\ideal{a_2}$ yields $X'\cap\ideal{a_2}=X\cap\ideal{a_2}$, and we conclude $u\in X'\cap\ideal{a_2}=X\cap\ideal{a_2}\subseteq X$, that is, $u\in X$. 
Second, if we had  $a_1<u<1$, then 
$a_1\not\leq a_2$ would exclude $u\leq a_2$,  $\pair{a_1}{a_2}\in\Pcompp L$ would exclude $u\parallel a_2$, and $u<1=a_1\vee a_2$ would exclude $u\geq a_2$. Thus this case cannot occur.
Third, the case $u\leq a_1$ implies $u\in X$, because $X\cap\ideal{a_1}$ is a maximal CD-independent subset of $\ideal{a_1}$. 
Therefore,  $X'\subseteq X$ and $X$ is a maximal CD-independent subset. This shows that  \eqref{partkc} implies \eqref{partka}, completing the proof of Part~\eqref{partk}.

Now, we deal with Part~\eqref{parth}. Assume that $L$ is a finite distributive lattice and that  \eqref{partha} holds. The first paragraph in the proof of the Main Theorem of Cz\'edli, Hartmann and Schmidt~\cite{czg-h-sch} explicitely says that   $k=|\max(X\setminus\set 1)|$ is at most 2. Hence, using the maximality of $X$,  \eqref{parthb} follows in an obvious way. 

Conversely, assume that \eqref{parthb} holds.  In virtue of \eqref{dstrlcppscp}, 
we conclude the validity of \eqref{partha} by that same argument that proved the implication \eqref{partkc} $\Rightarrow$ 
\eqref{partka}. 
This completes the proof of Theorem~\ref{thmmain}.
\end{proof}

\begin{proof}[Proof of Corollary~\ref{corpsccmirr}] Assume that $a_1,a_2\in L\setminus\set{0,1}$ such that $\pair{a_1}{a_2}$ belongs to $\Compp L\cap \Pcompp L$. For $i\in\set{1,2}$, let $X_i$ be a maximal CD-independent subset of $\ideal{a_i}$, and let $X=X_1\cup X_2\cup \set1$. Clearly, $X$ is a CD-independent subset of $L$ since  $\pair{a_1}{a_2}\in \Compp L$. Hence, we can extend $X$ to a maximal CD-independent subset $X'$ of $L$. 

For the sake of contradiction, suppose $X'\neq X$, and pick an element $u\in X'\setminus X$. Since $u\notin X$, we have $u\not\geq a_1\vee a_2=1$. Hence, $u\not\geq a_1$ or $u\not\geq a_2$. Let, say, $u\not\geq a_1$. If we had $u\leq a_1$, then $X_1\cup\set u$, which is strictly larger than $X_1$, would be a CD-independent subset of $\ideal {a_1}$, contradicting the maximality of $X_1$. Thus $a_1\parallel u$, and the CD-independence of $X'$ yields $a_1\wedge u=0$. This, together with $\pair{a_1}{a_2} \in \Pcompp L$, yields $u\leq a_2$, which clearly contradicts the maximality of $X_2$. It follows that $X=X'$ is a maximal CD-independent subset of $L$. Hence, clearly, $\set{0,1}\cup\atoms L \subseteq X$.
Finally, since the antichain $\set{a_1,a_2}$ equals $\max(X\setminus\set 1)$, Part~\eqref{partk} of Theorem~\ref{thmmain} implies $\set{a_1,a_2}\subseteq \Mir L$.    
\end{proof}

\section{Examples and comments}\label{examplessection}
The proof of  Part \eqref{partk} of Theorem~\ref{thmmain} was based on Proposition~\ref{cirlProp}. Clearly, there exists a purely lattice theoretical proof of Part \eqref{partk} since, in the worst case, we can repeat several parts from the proof of Proposition~\ref{cirlProp}, given in Cz\'edli \cite{czgcircles}. However, the present approach based on circles gives more visual insight and it is much more economic; once we have \cite{czgcircles}, it is natural to use.

The examples given in this section show that the assumptions stipulated in Theorem~\ref{thmmain} are relevant. In fact, we do not see any straightforward way of reasonable generalizations even if $Y$ in Part \eqref{parte} is assumed to be maximal.  Note that it was already proved in Cz\'edli, Hartmann and Schmidt~\cite{czg-h-sch} that distributivity in Proposition~\ref{czghschprop} cannot be replaced by a weaker lattice identity.

\begin{figure}[htc]
\centerline
{\includegraphics[scale=1.0]{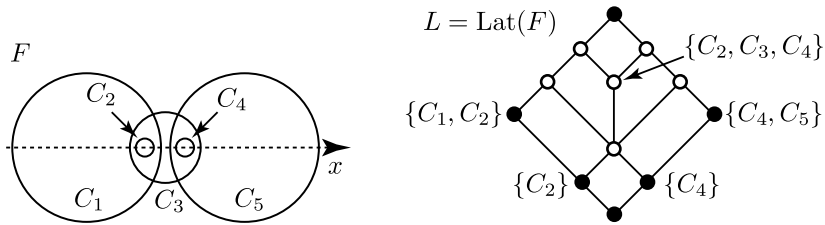}}
\caption{A maximal CD-independent set  in $L$
\label{fig2}}
\end{figure}

\begin{example} By Proposition~\ref{cirlProp}(B), the lattice $L=\lat F$  in Figure~\ref{fig2} is a dually slim and meet-distributive. (It also follows from 
Cz\'edli and Schmidt~\cite[Theorem 12]{czgschvisual} that $L$ has these properties.) We have
$\length L+|\atoms L|=7$. The black-filled elements form a maximal CD-indep\-endent subset of size 6. This shows that in Part~\eqref{parte} of Theorem~\ref{thmmain}, the inequality can be proper even if $L$ is dually slim and $Y$ is a maximal CD-independent subset.
\end{example}

\begin{figure}[htc]
\centerline
{\includegraphics[scale=1.0]{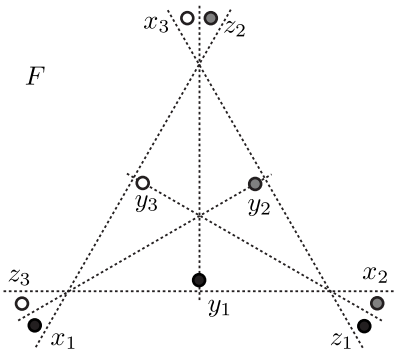}}
\caption{An atomistic example  
\label{fig3}}
\end{figure}

A lattice is \emph{atomistic} if each of its element is the join of some atoms.  The following example indicates that atomicity would not improve Theorem~\ref{thmmain}\eqref{parte}.

\begin{example} Let $L=\lat F$, where $F$ consists of the circles depicted in Figure~\ref{fig3}. The dotted lines indicate how these nine little circles are positioned.
(Note that smaller circles with radius 0, which are points, would also do.)
 Clearly, $L$ is an atomistic lattice, and it is meet-distributive by Proposition~\ref{cirlProp}(A). Since $\atoms L=\bigset{\set x: x\in F}$, we have $|\atoms L|=9$. By an already  mentioned result of \init{S.\,P.\ }Avann~\cite{avann61} and \cite{avann64}, see also \init{M.\ }Stern \cite[Theorem 7.2.27]{stern}, 
$\length L=|\Jir L|=|\atoms L|$. For $i=1,2,3$, let 
$A_i=\set{x_i,z_i}$ and $B_i=\set{x_i,y_i,z_i}$; these subsets of $F$ belong to $L$. Note that $B_1$, $B_2$, and $B_3$ consist of the black-filled circles, the grey-filled circles, and the empty circles, respectively.
It is a straightforward but tedious task to verify that $Y=\atoms L\cup\set{\emptyset, F, A_1, B_1,A_2,B_2,A_3,B_3}$ is a maximal CD-independent subset of $L$. Since $|Y|=17$ and $¡\length L+|\atoms L|=18$, the equality in Part~\eqref{parte} of Theorem~\ref{thmmain} is not an equality in this case.
\end{example}

\section{An application to the theory islands}\label{applicationssection}
The concept of islands appeared first in Cz\'edli~\cite{czgislands}. For definition, let  $m$ and $n$ be natural numbers, and consider an $m$-by-$n$ \emph{rectangular board}, denoted by $\board mn$. It consists of little unite squares called cells, which are arranged in $m$ columns and $n$ rows. For example, $\board 88$ is the chess-board and $\board 84$ is depicted in Figure~\ref{fig4}. Let $h\colon \board mn\to \mathbb R$ be a map, called \emph{height function}. 
A nonempty set $H$ of cells forming a
rectangle is called a \emph{$($cellular$)$ rectangular island} with respect to $h$ if the minimum height of $H$ is greater than the height
of any cell around the perimeter of $H$.
Let us emphasize that the empty set is never a cellular rectangle. 
The concept of islands was motivated by Foldes and Singhi~\cite{foldessinghi}, where cellular rectangular islands on $\board n1$ played a key role in characterizing maximal instantaneous codes.
The number of cellular rectangular islands of the system $\tuple{\board mn;h}$ depends on the height function, and it takes its maximum for some $h$. This maximum value, denoted by $\cfunc mn$, is determined by the following result, where $\lfloor x\rfloor$ stands for the (lower) integer part of $x$. 

\begin{proposition}[\cite{czgislands}] \label{czgislprop}
For $m,n\in\mathbb N$, 
$\cfunc mn= \lfloor (mn+m+n-1)/2\rfloor$.
\end{proposition}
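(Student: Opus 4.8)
The plan is to recast the extremal island problem in the language of this paper and then to sharpen the bound coming from Theorem~\ref{thmmain}\eqref{parte} by the one extra feature that Theorem~\ref{thmmain} cannot see: the fact that islands must be \emph{separated} by lower cells. Throughout, fix a height function $h$ on $\board mn$ and let $\mathcal I$ be its set of islands.

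First I would record two structural facts. (Laminarity) Any two islands are comparable or disjoint: if $H_1,H_2$ overlapped with neither containing the other, then some perimeter cell of $H_1$ would lie in $H_2$ and some perimeter cell of $H_2$ would lie in $H_1$, forcing $\min H_2<\min H_1<\min H_2$; this is the standard argument from the island literature (see Pach, Pluh\'ar, Pongr\'acz and Szab\'o~\cite{pachetall} and Cz\'edli~\cite{czgislands}). (Gap Lemma) Two disjoint islands are never edge-adjacent: an edge between $x\in H_1$ and $y\in H_2$ would give $h(y)<\min H_1\le h(x)$ and $h(x)<\min H_2\le h(y)$, a contradiction. Laminarity says precisely that $\mathcal I$ is a CD-independent subset (Definition~\ref{cddef}) of the lattice $L$ whose elements are the rectangles of $\board mn$ together with $\emptyset$, ordered by inclusion, with meet the intersection and join the smallest enclosing rectangle. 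One checks that $L$ is the closure lattice of the axis-parallel bounding-box operator, which is a convex geometry, so $L$ is meet-distributive; its atoms are the $mn$ single cells and its length is $m+n-1$. Hence Theorem~\ref{thmmain}\eqref{parte} already gives the crude bound $|\mathcal I|\le \length L+|\atoms L|=mn+m+n-1$, essentially twice the target value $\cfunc mn$.

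The whole board is always an island, so $\mathcal I$ is a rooted tree under inclusion. To recover the missing factor of two I would induct on the area $mn$. Deleting the root and passing to the maximal proper islands $R_1,\dots,R_t$ gives pairwise non-adjacent sub-rectangles (Gap Lemma), each of which carries exactly the island system of the sub-board it spans; this yields the recursion
\[
\cfunc mn = 1 + \max \sum_{i=1}^t \cfunc{w_i}{h_i},
\]
the maximum ranging over placements of pairwise non-adjacent rectangles $R_i$ of dimensions $w_i\times h_i$ in $\board mn$. Writing $\phi(a,b)=(a+1)(b+1)-2$, so that the claimed value is $\lfloor\phi(a,b)/2\rfloor$, the inductive step becomes the packing inequality $1+\sum_i\lfloor\phi(w_i,h_i)/2\rfloor\le\lfloor\phi(m,n)/2\rfloor$. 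I expect this to be the main obstacle. Here $(a+1)(b+1)$ counts the corner lattice points of an $a\times b$ rectangle, and non-adjacent rectangles have almost-disjoint corner sets — two of them can share a corner only by touching diagonally, in which case at most two meet at a single point. The heart of the argument is therefore to bound $\sum_i(w_i+1)(h_i+1)$ against $(m+1)(n+1)$ while charging the diagonally shared corners, and then to reconcile the floor functions, since the correction term in the final formula ($-1$ versus $-2$) depends on the parities of $m$ and $n$.

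For the matching lower bound I would realize equality in the same recursion by an explicit $h$. In one dimension a strictly increasing height along a $1\times N$ strip produces exactly the $N$ nested islands $[a,N]$, $a=1,\dots,N$, meeting $\cfunc 1N=N$. In two dimensions I would split $\board mn$ by a single uniformly low ``moat'' line (a full row or column) into two non-adjacent sub-boards, fill each recursively, and choose the split to balance the two sizes; for the thinnest boards the extremal pattern is instead a checkerboard of single-cell islands together with the whole board. Checking that this construction attains $\lfloor(mn+m+n-1)/2\rfloor$ in each parity class is routine but tedious bookkeeping rather than a conceptual difficulty. Together with the upper bound this closes the induction and yields $\cfunc mn=\lfloor(mn+m+n-1)/2\rfloor$.
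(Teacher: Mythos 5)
Your argument has a genuine gap at exactly the point you yourself flag as ``the main obstacle'': the packing inequality $1+\sum_{i}\bigl\lfloor\bigl((w_i+1)(h_i+1)-2\bigr)/2\bigr\rfloor\le\bigl\lfloor\bigl((m+1)(n+1)-2\bigr)/2\bigr\rfloor$ is never proved, and everything you do establish (the meet-distributive lattice of rectangles, Theorem~\ref{thmmain}\eqref{parte}, the recursion over maximal proper islands) only delivers the crude bound $mn+m+n-1$, roughly twice the claimed value. Moreover, the route you sketch for closing the gap --- charging ``diagonally shared corners'' --- starts from a separation property weaker than the true one, because your Gap Lemma only excludes edge-adjacency. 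In \cite{czgislands} and in this paper, ``around the perimeter'' includes the diagonally adjacent cells; this reading is forced by Lemma~\ref{trnlstlMa}, since two diagonally touching islands would have incomparable grids meeting in a single vertex, so $\sgrid(\isyst)$ would fail to be CD-independent. Hence two disjoint islands share no grid vertex whatsoever, and there are no shared corners to charge.

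Once you use that stronger fact, your obstacle dissolves into elementary arithmetic: writing $g_i=(w_i+1)(h_i+1)$ and $G=(m+1)(n+1)$, vertex-disjointness gives $\sum_i g_i\le G$; for $k\ge 2$ rectangles the induction hypothesis yields $\cfunc mn\le 1+\sum_i\bigl(\lfloor g_i/2\rfloor-1\bigr)\le 1+G/2-k\le G/2-1$, hence $\cfunc mn\le\lfloor G/2\rfloor-1$ by integrality, while for $k=1$ a proper subrectangle omits an entire row or column of grid points, so $g_1\le G-2$ and $\cfunc mn\le 1+\cfunc{w_1}{h_1}=\lfloor g_1/2\rfloor\le\lfloor G/2\rfloor-1$. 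With this inserted your induction would be correct, but note that it is then essentially the elementary grid-point argument of Bar\'at, Hajnal and Horv\'ath~\cite{barathajnalhorvath}, not the paper's proof: the paper stays inside lattice theory, embedding $\sgrid(\isyst)$ into the powerset lattice of the grid, invoking Proposition~\ref{czghschprop} (every maximal CD-independent subset there has exactly $2(m+1)(n+1)$ elements), and then inductively constructing $t+1$ or $t+2$ ``straight bizarre'' padding sets to force the island count down to $\lfloor G/2\rfloor -1$; your lattice of rectangles and Theorem~\ref{thmmain}\eqref{parte} play no role in either completed argument. Deferring the lower-bound construction is acceptable, as the paper also simply cites \cite{czgislands} for it.
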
 

This result was soon followed by many related ones, due to  Bar\'at, Foldes,   E.\,K.\ Horv\'ath, G.\ Horv\'ath, Lengv\'arszky, 
N\'emeth, Pach, Pluh\'ar, Pongr\'acz, \v{S}e\v{s}elja,  Szab\'o, and Tepav\v cevi\' c. The results of these authors, written alone or in various groups, range from triangular boards to the continuous case and from lattice theory to combinatorics, see 
\cite{barathajnalhorvath}, \cite{foldesatall}, \cite{khemiskolc}, \cite{horvathhorvath}, \cite{horvathnemethpluhar}, \cite{horvathsestep:card}, \cite{horvathsestep:cut},
\cite{lengvarszki}, \cite{pachetall}, \cite{pluhar:brick}, and some further papers not referenced here. 
Since \cite{foldesatall} and \cite{khemiskolc} give  good overviews  on islands, we do not go into further historical details. 
However, we mention the following feature of this research field. At the beginning, in \cite{czgislands} and also in \cite{horvathnemethpluhar} and \cite{pluhar:brick}, a lattice theoretical result of Cz\'edli, Huhn, and Schmidt~\cite{czghuhnsch} on weakly independent subsets played the main role in proofs. Soon afterward, 
simpler approaches were discovered in  \cite{barathajnalhorvath}, and Lattice Theory was more or less neglected thereafter. 

By giving a new proof for Proposition~\ref{czgislprop} based on CD-independence, the goal of this section is to demonstrate that Lattice Theory is still competitive with other approaches. Note that, besides that this was the original motivation in Cz\'edli, Hartmann and Schmidt~\cite{czg-h-sch}, this task was also suggested by Horv\'ath~\cite[Problem 9.1]{khemiskolc}. We only need Proposition~\ref{czghschprop}, taken from \cite{czg-h-sch}, for this purpose.

Each cell of $\board mn$ has exactly four vertices. For a (cellular) rectangular subset $X$ of $\board mn$, let $\grid(X)$ denote the set of vertices of the cells of $X$. We call $\grid(X)$ the \emph{point rectangle} associated with the \emph{cellular rectangle} $X$, while  $\grid(\board mn)$ is the \emph{grid} associated with $\board mn$. In general, a grid is
a  set $\set{0,1,\dots,i}\times \set{0,1,\dots,j}$ of points  for some $i,j\in\mathbb N=\set{1,2,3,\dots}$, shifted to any location in the plane. For a set $\isyst$ of cellular rectangular subsets of $\board mn$, we let $\sgrid(\isyst)=\set{\grid(X): X\in\isyst}$. 
(The letter ``S'' in the mnemonic will remind us ``set''.)
The idea of working with grids rather than boards goes back to  E.\,K.\ Horv\'ath,  G.\ Horv\'ath, N\'emeth, and Szab\'o~\cite{horvathhorvath}.  

First of all, we rephrase Cz\'edli~\cite[Lemma 2]{czgislands}, which was used practically by all previous approaches dealing with (finitely many) islands. The collection of all subsets of a set $U$ is denoted by $\powset\bigl(U)$.

\begin{lemma}[{\cite[Lemma 2]{czgislands}}]\label{trnlstlMa} 
For an arbitrary set $\isyst$ of cellular rectangles of the board $\board mn$, 
the following two conditions are equivalent.
\begin{itemize}
\item
$\isyst$ is the collection of all cellular rectangular islands of $\tuple{\board mn;h}$ for an appropriate height function $h$;
\item $\sgrid(\isyst)$ is a CD-independent subset of  $\bigl\langle\powset\bigl(\grid(\board mn)\bigr),\subseteq\bigr\rangle$ and $\board mn\in \isyst$.
\end{itemize}
\end{lemma}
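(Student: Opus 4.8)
The plan is to prove the equivalence in Lemma~\ref{trnlstlMa} by unwinding the definition of a cellular rectangular island and translating the ``surrounded by lower cells'' condition into a statement about how the associated point rectangles $\grid(X)$ sit inside one another. The crucial geometric observation is that for two cellular rectangles $X,Y$ we have $\grid(X)\subseteq\grid(Y)$ precisely when $X\subseteq Y$ as cellular rectangles, and that $\grid(X)\cap\grid(Y)$ fails to be $\emptyset$ only when $X$ and $Y$ share at least a common vertex. Thus CD-independence of $\sgrid(\isyst)$ in $\bigl\langle\powset\bigl(\grid(\board mn)\bigr),\subseteq\bigr\rangle$ says exactly that any two members of $\isyst$ are either nested (one cellular rectangle inside the other) or have disjoint point rectangles; note that $0=\emptyset$ is the bottom of the powerset lattice, so ``meet equals $0$'' becomes ``the two grids are disjoint.''

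For the forward direction, I would assume $\isyst$ is the collection of all islands of some height function $h$ and verify the two conjuncts of the second bullet. That $\board mn\in\isyst$ is immediate, since the whole board trivially satisfies the island condition (there are no cells around its perimeter). For CD-independence, I would take two islands $H_1,H_2\in\isyst$ and argue that they cannot ``properly overlap'': if they shared a cell but neither contained the other, one could produce a cell of $H_1$ on the perimeter of $H_2$ (or vice versa) whose height is at least the minimum height of the other island, contradicting the defining inequality of an island. This is the standard ``islands form a laminar family'' argument, and it shows $H_1\subseteq H_2$, $H_2\subseteq H_1$, or $H_1\cap H_2=\emptyset$ at the cellular level, which after applying $\grid$ gives exactly comparability-or-disjointness of $\grid(H_1)$ and $\grid(H_2)$.

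For the converse, I would start from a CD-independent system $\sgrid(\isyst)$ containing $\grid(\board mn)$ and construct a height function $h$ realizing $\isyst$ as its set of islands. The natural recipe is to assign to each cell a height governed by the nesting depth of the members of $\isyst$ containing it: since $\isyst$ is laminar with top element $\board mn$, each cell lies in a linearly ordered chain of members of $\isyst$, and I would set $h$ on a cell to reflect the ``innermost'' rectangle of $\isyst$ that contains that cell, with strictly increasing values as one descends into more deeply nested rectangles. One then checks that the islands of $\tuple{\board mn;h}$ are precisely the members of $\isyst$: each $X\in\isyst$ becomes an island because its cells are assigned heights strictly above the cells just outside it (which belong to a strictly shallower rectangle of the chain), and conversely no set outside $\isyst$ can be an island, because a would-be island not in $\isyst$ would have to cross the boundary of some member of the laminar family and hence have a perimeter cell of height at least its interior minimum.

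The main obstacle I expect is the converse construction: defining $h$ so that it realizes \emph{exactly} $\isyst$ — no more, no fewer islands — requires care in assigning the numerical heights so that every member of $\isyst$ genuinely satisfies the strict minimum-over-perimeter inequality while no spurious rectangle accidentally does. The laminarity (CD-independence) of $\sgrid(\isyst)$ together with the presence of the top element $\board mn$ is exactly what makes the ``nesting depth'' assignment well defined and consistent, so the proof hinges on exploiting that structure; the bookkeeping of strict inequalities on the boundaries is where the real work lies, though conceptually it is routine once the laminar tree of $\isyst$ is in hand. Since this is precisely \cite[Lemma 2]{czgislands}, I would in practice cite that source, but the above is the self-contained argument one would reconstruct.
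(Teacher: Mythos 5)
The paper itself gives no proof of this lemma: it is imported verbatim from \cite[Lemma 2]{czgislands}, so there is no internal argument to compare against, and your reconstruction has to stand on its own. Your converse direction is the right construction and can be made to work: assigning to each cell the nesting depth of the innermost member of $\isyst$ containing it does realize $\isyst$ exactly, and the key point (which you correctly attribute to laminarity plus the top element) is that grid-CD-independence forces any member of $\isyst$ containing a cell around the perimeter of $X\in\isyst$ to be a proper ancestor of $X$, hence of strictly smaller depth; conversely a rectangular island $R$ not in $\isyst$ sits properly inside the least member $Z\supseteq R$ and then some cell of $Z\setminus R$ adjacent to $R$ has height at least $\min h(R)$.

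The genuine gap is in your forward direction. You rule out two islands that share a cell without being nested, and then conclude that ``$H_1\subseteq H_2$, $H_2\subseteq H_1$, or $H_1\cap H_2=\emptyset$ at the cellular level \dots after applying $\grid$ gives exactly comparability-or-disjointness of $\grid(H_1)$ and $\grid(H_2)$.'' That last step is false, and your own first paragraph says why: $\grid(H_1)\cap\grid(H_2)=\emptyset$ is strictly stronger than $H_1\cap H_2=\emptyset$, since two cell-disjoint rectangles that are adjacent (sharing an edge, or even a single corner vertex) have intersecting yet incomparable grids. So you must additionally prove that two disjoint islands cannot touch at all. This is not a pedantic point: it is precisely the extra information that the grid formulation encodes beyond cell-level laminarity, and it is also what your converse construction silently relies on (an adjacent-but-disjoint pair of rectangles can never both be islands for any $h$, so a cell-laminar but grid-non-laminar $\isyst$ is not realizable). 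The fix uses the same double inequality as the overlap case: if $H_1$ and $H_2$ are disjoint but share a vertex, then each contains a cell around the perimeter of the other, giving $\min h(H_1)\le h(c_1)<\min h(H_2)\le h(c_2)<\min h(H_1)$, a contradiction. Relatedly, in your overlap argument ``or vice versa'' should be ``and vice versa'': a single cell of $H_1$ on the perimeter of $H_2$ only yields $\min h(H_1)<\min h(H_2)$, which is not by itself a contradiction; you need both symmetric inequalities and then chain them, which is possible because $H_1\setminus H_2$ and $H_2\setminus H_1$ are both nonempty in that case.
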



Note that some authors, including 
Pach, Pluh\'ar,  Pongr\'acz, and Szab\'o~\cite
{pachetall}, call CD-independent subsets as \emph{laminar systems}.

\begin{proof}[Proof of Proposition~\ref{czgislprop}] We do not deal with $\cfunc mn\geq  \lfloor (mn+m+n-1)/2\rfloor$ since this inequality  is proved by an easy construction without any tool, see \cite{czgislands}. 

For brevity, let $G=\powset\bigl(\grid(\board mn)\bigr)$. By Proposition~\ref{czghschprop}, taken from \cite{czg-h-sch}, each maximal CD-independent set of $\tuple{G; \subseteq}$ is of size 
\[\length G+|\atoms G|=  2\cdot |\grid(\board mn) |= 2\cdot {(m+1)(n+1)}\text.
\] 
With the notation $\np=n+1$ and $\mp=m+1$, 
\begin{equation}\label{mpnpsdk}
\text{each maximal CD-independent subset of $G$ is of size $2\mp\np$.}
\end{equation} 
Let $\isyst$ be the collection of all cellular rectangular islands of $\tuple{\board mn; h}$, and denote
$\sgrid(\isyst)$   by $\jsyst$ and $|\jsyst|$ by $t$. Since $|\isyst|=|\jsyst|=t$, it suffices to show the inequality in 
\begin{equation}\label{rRget}
t \leq \lfloor \mp\np/2 \rfloor-1    = \lfloor (mn+m+n-1)/2\rfloor \text.
\end{equation}
We know from Lemma~\ref{trnlstlMa} that $\jsyst$ is a CD-independent subset of $G$. Since the cellular rectangles of $\board mn$ are nonempty by definition, each member of $\jsyst$ consist of at least four points. Therefore the set 
\[\ksyst =\jsyst\cup\set{0_G}\cup\atoms G = \jsyst\cup\set{X: X\subseteq \grid(\board mn)\text{ and }|X|\leq 1 }
\]
is also CD-independent, and it is of size $t+1+ \mp\np$.

A subset $X$ of $\grid(\board mn)$ will be called \emph{bizarre} if $|X|\geq 2$ and there is no rectangle $Y$ of $\board mn$ with $X=\grid (Y)$. We say that a bizarre subset of $\grid(\board mn)$ is \emph{straight}  if all of its points lie on the same vertical or horizontal line. We will only use straight bizarre sets.
We claim that there exist a set $\biz$ of straight bizarre subsets of $\grid(\board mn)$ such that 
\begin{equation}\label{diekZ}
\ksyst\cup\biz\text{ is CD-independent in }G\text{ and }|\biz|\geq 
\begin{cases}t+1&\text{if } 2\mid\mp\np\cr
t+2&\text{if } 2\notmid\mp\np
\end{cases}\text{ .}
\end{equation}
Note that the validity of \eqref{diekZ} will complete the proof as follows. First, let $\mp\np$ be odd. Since $|\ksyst|=t+1+\mp\np$, \eqref{mpnpsdk} and \eqref{diekZ} yield  
$ t+1+\mp\np+t+2\leq |\ksyst|+|\biz|\leq 2\mp\np
$, which clearly implies \eqref{rRget}. For $\mp\np$ even, we conclude \eqref{rRget} from 
$ t+1+\mp\np+t+1\leq |\ksyst|+|\biz|\leq 2\mp\np$ even faster.

We prove \eqref{diekZ} by induction on $mn$. 
Assume that $\mp\np$ is even, and let $U_1,\dots,U_k$ be the list of maximal elements of $\isyst\setminus\set{\board mn}$. 
First, assume $k=1$.
Clearly, at least one of the four sides of $\board mn$ is \emph{separated} from $U_1$ in the sense that no cell on this side belongs to $U_1$. 
Note that $|\isyst\cap\ideal{U_1}|=t-1$, where 
$\ideal U_1=\set{X\in \powset{(\board mn)}: X\subseteq U_1}$ denotes the principal ideal  generated by $U_1$. 
Applying the induction hypothesis  to the subboard $U_1$, we can add at least $(t-1)+1$ straight bizarre subsets of  $\grid(U_1)$ to 
$\ksyst$ to obtain a larger CD-independent subset of $G$. Two neighboring points on the separated side form a straight bizarre set, which we still can add without loosing CD-independence. The set $\biz$ of all these bizarre sets is of size at least $(t-1)+1+1=t+1$, as desired. 

Second, assume $k\geq 2$. 
For $i=1,\dots, k$, let $t_i=|\isyst\cap \ideal {U_i}|$. Clearly, $t=t_1+\dots+t_k+1$. 
By the induction hypothesis, we can add at least $t_i+1$ straight bizarre subsets of $\grid(U_i)$ to $\sgrid(\isyst \cap\ideal{U_i} )$ without spoiling its CD-independence. Since the bizarre subsets we add to $\sgrid(\isyst \cap\ideal{U_i} )$ are disjoint from $\grid(U_j)$ for $j\neq i$, we can add all these bizarre subsets simultaneously to $\ksyst$ without hurting its CD-independence. This way, the set $\biz$ of all straight bizarre subsets we add is at least
\begin{equation}\label{hakkd} (t_1+1)+\dots+(t_k+1)=t+(k-1)\geq t+1\text.
\end{equation}
Hence, \eqref{diekZ} holds in this case again. 

Next, we assume that $\mp\np$ is odd. The treatment of this case is more or less the same as that for $2\mid\mp\np$ but we have to find an appropriate $\biz$ of size at least $t+2$. 
That is, we have to find an extra   straight bizarre subset. 
Hence, it will suffice to compare this case to the  case of $2\mid \mp\np$ wherever it is possible.  Observe that $1\leq m<\mp$ and $2\notmid \mp$ gives $\mp\geq 3$, and we also have $\np\geq 3$. Therefore, if $k=1$, then we can find two comparable straight bizarre subsets of a separated side of $\board mn$ rather than just one, and $|\biz|\geq t+2$ follows the same way  as we obtained $|\biz|\geq t+1$ in the previous argument for $\mp\np$ even and  $k=1$. If $k\geq 3$, then $|\biz|\geq t+2$ comes from \eqref{hakkd}.  

\begin{figure}[htc]
\centerline
{\includegraphics[scale=1.0]{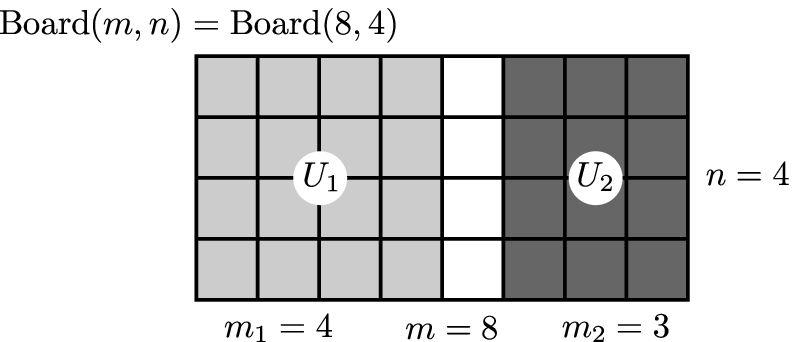}}
\caption{The case of $k=2$ and $2\notmid\mp\np$ 
\label{fig4}}
\end{figure}

Therefore, we are left with the case
$k=2$ such that no side of $\board mn$ is separated from both $U_1$ and $U_2$. 
The situation, up to rotation by ninety degrees, is exemplified in Figure~\ref{fig4}. 
By the maximality of $\isyst$, there is no cellular rectangular subset of $\board mn$ that strictly includes $U_i$ and keeps a positive distance from $U_{3-i}$, for $i\in\set{1,2}$. It follows that three sides of $U_i$ lie on appropriate sides of $\board mn$, and the distance between $U_1$ and $U_2$ is 1. Let $U_i$ be an $m_i$-by-$n$ board for $i\in\set{1,2}$.
Since $\mp=m+1$ is odd and $m=m_1+m_2+1$, one of $\mp_1=m_1+1$ and $\mp_2=m_2+1$ is odd, and the other is even. Let, say, $\mp_1$ be odd. Since $\mp_1\np$ is odd, the induction hypothesis allows us to achieve $t_1+2$ instead of $t_1+1$ in \eqref{hakkd}, and $|\biz|\geq t+2$ follows again.
\end{proof}

\begin{remark} Since we only used straight bizarre subsets rather than arbitrary bizarre ones in the proof above, this method, possibly with non-straight bizarre subsets, will hopefully work for other sorts of boards and islands.
\end{remark}

\begin{ackno} The author is indebted to Eszter K.\ Horv\'ath for her comments on the literature of islands.
\end{ackno}

\end{document}